\begin{document}

\newtheorem{theorem}{Theorem}[section]
\newtheorem{result}[theorem]{Result}
\newtheorem{fact}[theorem]{Fact}
\newtheorem{conjecture}[theorem]{Conjecture}
\newtheorem{lemma}[theorem]{Lemma}
\newtheorem{proposition}[theorem]{Proposition}
\newtheorem{corollary}[theorem]{Corollary}
\newtheorem{facts}[theorem]{Facts}
\newtheorem{props}[theorem]{Properties}
\newtheorem*{thmA}{Theorem A}
\newtheorem{ex}[theorem]{Example}
\theoremstyle{definition}
\newtheorem{definition}[theorem]{Definition}
\newtheorem*{remark}{Remark}
\newtheorem{example}[theorem]{Example}
\newtheorem*{defna}{Definition}

\newcommand{\notes} {\noindent \textbf{Notes.  }}
\newcommand{\defn} {\noindent \textbf{Definition.  }}
\newcommand{\defns} {\noindent \textbf{Definitions.  }}
\newcommand{\x}{{\bf x}}
\newcommand{\e}{\epsilon}
\renewcommand{\d}{\delta}
\newcommand{\z}{{\bf z}}
\newcommand{\B}{{\bf b}}
\newcommand{\V}{{\bf v}}
\newcommand{\T}{\mathbb{T}}
\newcommand{\Z}{\mathbb{Z}}
\newcommand{\Hp}{\mathbb{H}}
\newcommand{\D}{\Delta}
\newcommand{\R}{\mathbb{R}}
\newcommand{\N}{\mathbb{N}}
\renewcommand{\B}{\mathbb{B}}
\renewcommand{\S}{\mathbb{S}}
\newcommand{\C}{\mathbb{C}}
\newcommand{\ft}{\widetilde{f}}
\newcommand{\dt}{{\mathrm{det }\;}}
 \newcommand{\adj}{{\mathrm{adj}\;}}
 \newcommand{\0}{{\bf O}}
 \newcommand{\av}{\arrowvert}
 \newcommand{\zbar}{\overline{z}}
 \newcommand{\xbar}{\overline{X}}
 \newcommand{\htt}{\widetilde{h}}
\newcommand{\ty}{\mathcal{T}}
\newcommand\diam{\operatorname{diam}}
\renewcommand\Re{\operatorname{Re}}
\renewcommand\Im{\operatorname{Im}}
\newcommand{\tr}{\operatorname{Tr}}
\renewcommand{\skew}{\operatorname{skew}}

\newcommand{\ds}{\displaystyle}
\numberwithin{equation}{section}
%
%%%
%
% Dave added these
%
%%%
%
\newcommand{\M}{{\mathcal{M}}}
\newcommand{\tef}{transcendental entire function}
\newcommand{\qfor}{\quad\text{for }}
\newcommand*{\defeq}{\mathrel{\vcenter{\baselineskip0.5ex \lineskiplimit0pt
 \hbox{\scriptsize.}\hbox{\scriptsize.}}}
 =}
%
%%%
%

\renewcommand{\theenumi}{(\roman{enumi})}
\renewcommand{\labelenumi}{\theenumi}

\newcommand{\alastair}[1]{{\scriptsize \color{red}\textbf{Alastair's note:} #1 \color{black}\normalsize}}

\title[The maximum modulus set]{The maximum modulus set of a quasiregular map}

\author{Alastair N. Fletcher}
\address{Department of Mathematical Sciences, Northern Illinois University, DeKalb, IL 60115-2888. USA}
\email{fletcher@math.niu.edu}

\author{David J. Sixsmith}
\address{School of Mathematics and Statistics\\ The Open University\\
Milton Keynes MK7 6AA\\ UK\textsc{\newline \indent {\includegraphics[width=1em,height=1em]{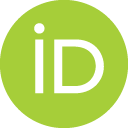} {\normalfont https://orcid.org/0000-0002-3543-6969}}}}
\email{david.sixsmith@open.ac.uk}

\thanks{The first author was supported by a grant from the Simons Foundation, \#352034}

\date{\today}

\begin{abstract}
We study, for the first time, the maximum modulus set of a quasiregular map. It is easy to see that these sets are necessarily closed, and contain at least one point of each modulus. Blumenthal showed that for entire maps these sets are either the whole plane, or a countable union of analytic curves. We show that in the quasiregular case, by way of contrast, any closed set containing at least one point of each modulus can be attained as the maximum modulus set of a quasiregular map. These examples are all of polynomial type. We also show that, subject to an additional constraint, such sets can even be attained by quasiregular maps of transcendental type.
\end{abstract}

\maketitle

\section{Introduction}
Suppose first that $f$ is an entire function, and define the \emph{maximum modulus function} by
\begin{equation}
\label{Mfdef}
M(r, f) \defeq \max_{|z| = r} |f(z)|, \qfor r \geq 0.
\end{equation}
Following \cite{Sixsmithmax}, we denote the set of points where $f$ achieves its maximum modulus, known as the \emph{maximum modulus set}, by $\M(f)$. In other words, we have
\begin{equation}
\label{Mdef}
\M(f) \defeq \{ z \in \C : |f(z)| = M(|z|,f) \}.
\end{equation} 

The set $\M(f)$ was first studied by Blumenthal \cite{Blum}. He showed, see \cite[Theorem 10]{Val}, that if $f$ is a monomial, then $\M(f) = \C$, and otherwise $\M(f)$ consists of an, at most countable, union of closed curves, each of which is analytic except at  its endpoints. A number of authors have since studied entire functions for which $\M(f)$ has unusual and pathological properties; see, for example, \cite{Hardy, PSS, PSS2, Tyler}.

The natural generalisation of analytic maps to higher dimensional spaces is the widely studied class of \emph{quasiregular} maps. Roughly speaking the difference between these classes is as follows. Whereas an analytic function maps infinitesimal circles to infinitesimal circles, a quasiregular function maps infinitesimal spheres to infinitesimal ellipsoids, with a uniform upper bound on the distortion of the ellipsoids. This weaker condition means that the class of quasiregular maps is much less rigid than that of analytic maps; for example, in (real) dimension greater than two there are many families of quasiregular maps, but the M\"obius maps are the only conformal maps. It is very natural, then, to ask how Blumenthal's result extends to quasiregular maps, and this is our goal here. 

In the interests of brevity, we refer to \cite{Rickman, Vuorinen} for the precise definition and basic properties of quasiregular maps. %Informally, if $n \geq 2$, then a map $f : \R^n \to \R^n$ is \emph{quasiregular} if it has uniformly bounded distortion. 
We say that a quasiregular map is \emph{$K$-quasiregular}, if its maximal dilatation is bounded above by some $K\geq 1$. A quasiregular map is of \emph{polynomial type} if it has finite degree, and otherwise it is of \emph{transcendental type}. A \emph{quasiconformal map} is an injective quasiregular map.

For a quasiregular map $f : \R^n \to \R^n$ the definition \eqref{Mfdef} carries across without modification, and the definition \eqref{Mdef} becomes simply
\begin{equation}
\label{Mdefqr}
\M(f) \defeq \{ x \in \R^n : |f(x)| = M(|x|,f) \}.
\end{equation}
Since the class of quasiregular maps is somewhat less rigid than the class of entire maps, one might expect that a result similar to Blumenthal's, but weaker, applies in this setting. It turns out that, in fact, the sharpest possible result is achievable.
\begin{theorem}
\label{thm:1}
Let $n \geq 2$, and let $T \subset \R^n$ be a closed set which meets every sphere centred at the origin of radius $r \geq 0$. Then, for each $d \in \N$, there exists a quasiregular map $h:\R^n \to \R^n$, of polynomial-type and of degree $d^{n-1}$, for which $\mathcal{M}(h) = T$.
\end{theorem}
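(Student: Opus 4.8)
The plan is to realise $h$ in the multiplicative form $h(x) = \beta(x)\,h_0(x)$, where $h_0 \colon \R^n \to \R^n$ is a fixed quasiregular ``model'' map that carries all of the degree, and $\beta \colon \R^n \to (0,1]$ is a scalar factor, close to $1$, whose only purpose is to depress the modulus of $h$ off the set $T$. For $h_0$ I would begin from a standard quasiregular power map of $\R^n$ of degree $d^{n-1}$ --- the natural higher-dimensional analogue of $z \mapsto z^d$; the exponent $d^{n-1}$ in the statement is exactly the degree of such a map, whose restriction to $\S^{n-1}$ is a bounded-distortion self-map of $\S^{n-1}$ of that degree --- and then post-compose it with a radial quasiconformal homeomorphism of $\R^n$ so that $h_0$ becomes positively $1$-homogeneous with $|h_0(x)| = |x|$ for every $x$. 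Neither operation changes the degree, and for the model obtained in this way $|Dh_0|$ is bounded and $J_{h_0}$ is bounded below almost everywhere.

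Since $T$ meets the degenerate sphere $\{0\}$ we have $0 \in T$, hence $\dist(x,T) \le |x|$ for all $x$, and I would take
\[
\beta(x) = 1 - \frac{\dist(x,T)}{8\,(1+|x|)},
\]
which is Lipschitz, satisfies $\tfrac78 < \beta(x) \le 1$, and equals $1$ precisely when $x \in T$. Then $|h(x)| = \beta(x)|x| = |x| - \tfrac{|x|}{8(1+|x|)}\,\dist(x,T)$, so on each sphere $\{|x| = r\}$ the modulus of $h$ is maximised exactly where $\dist(x,T)$ is minimised. As $T$ meets this sphere that minimum is $0$, attained precisely on $T \cap \{|x| = r\}$, and taking the union over $r \ge 0$ yields $\M(h) = T$. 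The factor $1+|x|$ in the denominator is irrelevant to this computation, but it is forced on us by the distortion estimate below.

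The substantive point is to check that $h$ is quasiregular. It suffices to verify that $h$ satisfies the analytic definition of quasiregularity, namely $h \in W^{1,n}_{\mathrm{loc}}(\R^n,\R^n)$ and $|Dh(x)|^n \le K\,J_h(x)$ for almost every $x$ and some $K = K(n,d)$ (discreteness and openness then being automatic, by Reshetnyak's theorem). Local Lipschitz continuity --- including at the origin --- is routine for the model $h_0$ just described. For the distortion inequality I would write $Dh = \beta\,Dh_0 + h_0\,(\nabla\beta)^{\top}$ and use the $1$-homogeneity of $h_0$: Euler's identity gives $Dh_0(x)\,x = h_0(x)$, so that
\[
J_h(x) = \beta(x)^n\,J_{h_0}(x)\bigl(1 + \beta(x)^{-1}\langle x,\nabla\beta(x)\rangle\bigr).
\]
Elementary estimates give $|\nabla\beta(x)| \le \tfrac14(1+|x|)^{-1}$ and $|\langle x,\nabla\beta(x)\rangle| \le \tfrac14$, so the bracketed factor lies in $[\tfrac12,\tfrac32]$; combined with $\beta \ge \tfrac78$ and the fact that $h_0$ is quasiregular, this gives $J_h(x) \ge \tfrac12\beta(x)^n J_{h_0}(x) > 0$ a.e.\ (so $h$ is sense-preserving), while $|Dh(x)| \le |Dh_0(x)| + |x|\,|\nabla\beta(x)| \le |Dh_0(x)| + 1$. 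Since $|Dh_0|^n \le K_0\,J_{h_0}$ a.e.\ and $J_{h_0}$ is bounded below a.e., these bounds combine to yield $|Dh|^n \le K\,J_h$ a.e. Hence $h$ is $K$-quasiregular.

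Finally, $h$ is proper, since $|h(x)| \ge \tfrac78|x|$, and $s \mapsto \bigl((1-s)\beta + s\bigr)h_0$ is a homotopy through proper maps from $h$ to $h_0$, so $\deg h = \deg h_0 = d^{n-1}$; in particular $h$ has finite degree, hence is of polynomial type. The step I expect to be the main obstacle is not any single computation but keeping the distortion bound uniform in $|x|$ --- this is precisely what dictates both the $1/(1+|x|)$ normalisation inside $\beta$ and the preliminary normalisation of the model map --- together with nailing down a sufficiently explicit model $h_0$ of degree exactly $d^{n-1}$ enjoying the regularity used above.
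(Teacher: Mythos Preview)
Your approach is correct and genuinely different from the paper's. Both arguments start from a Mayer-type power map $P$ of degree $d^{n-1}$, but you perturb it \emph{multiplicatively} by a scalar $\beta$ (after normalising $P$ to a $1$-homogeneous model $h_0$), whereas the paper perturbs it \emph{compositionally}: it builds a quasiconformal homeomorphism $h_1:\R^n\to\R^n$ which is the identity on $T$ and satisfies $|h_1(x)|<|x|$ off $T$, and then sets $h=P\circ h_1$. The paper obtains $h_1$ by pulling $T$ back through the Zorich map $\mathcal{Z}$, pushing points down in the $n$-th coordinate by an amount governed by $p_U(x)=d_U(x)/(1+d_U(x))$ on each complementary component $U$, and pushing forward again; quasiconformality is checked via the linear (metric) distortion.

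What each approach buys: your route is shorter and more elementary --- no Zorich lifting is needed for the perturbation, the Jacobian identity from Euler's relation $Dh_0(x)\,x=h_0(x)$ is a nice trick, and the role of the $(1+|x|)^{-1}$ damping is transparent. The price is that you must choose a sufficiently regular model $h_0$ (with $|Dh_0|\in L^\infty$ and $\operatorname{ess\,inf}J_{h_0}>0$), which amounts to taking a piecewise-linear Zorich/power construction; you flag this honestly. The paper's route avoids any such regularity hypothesis on $P$: only quasiregularity of $P$ and quasiconformality of $h_1$ are used, and the latter is proved from the metric definition, so the argument is more robust and transfers verbatim to the transcendental setting (Theorem~\ref{thm:2}), where no $1$-homogeneous model is available.
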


For transcendental-type quasiregular maps, Theorem \ref{thm:1} cannot hold as stated, for the following reasons. Suppose that $f : \R^n \to \R^n$ is a quasiregular map of transcendental type. It is well-known that the image under $f$ of every neighbourhood of infinity can only omit a finite set. It follows that $\M(f)$ fails to contain a neighbourhood of infinity. 

However, we show the following. Roughly speaking, this result shows that any closed set that contains at least one point of each modulus can be arbitrarily well approximated by the maximum modulus set of a quasiregular map of transcendental type. Here, for $r \geq 0$, we denote by $S(r)$ the sphere of radius $r$; in other words, $S(r) \defeq \{x \in \R^n : |x| = r \}$.

\begin{theorem}
\label{thm:2}
Let $n \geq 2$, and let $T \subset \R^n$ be a closed set which meets every sphere centred at the origin of radius $r \geq 0$. Then the following holds. Suppose that $(r_n)_{n\in \N}$ is a sequence of positive real numbers tending to infinity,
%satisfying 
%\begin{equation}
%\label{rneq}
%n\log(r_{n+1}/r_n) \to \infty,
%\end{equation}
and that $\epsilon \in (0,1)$. Define the exceptional sets
\begin{equation}
\label{Eeq}
E_{\epsilon} \defeq \bigcup_{n \in \N} (\epsilon r_n,r_n), \quad\text{and}\quad S_{\epsilon} \defeq \bigcup_{r\in E_{\epsilon}} S(r).
\end{equation}
Then there exists a transcendental-type quasiregular map $h:\R^n \to \R^n$ with the property that $T \setminus S_{\epsilon} = \mathcal{M}(h) \setminus S_{\epsilon}$.
\end{theorem}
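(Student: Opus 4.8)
The plan is to build $h$ by patching together, over a sequence of nested spherical annuli, rescaled restrictions of maps furnished by Theorem~\ref{thm:1} whose degrees tend to infinity, with all of the patching confined to the exceptional set $S_\epsilon$. Write $\mathcal{G} \defeq [0,\infty) \setminus E_\epsilon$, so that $\mathcal{G}$ is closed, contains the interval $[0,\delta_0]$ with $\delta_0 \defeq \epsilon \inf_n r_n > 0$, and $\R^n \setminus S_\epsilon = \{x : |x| \in \mathcal{G}\}$. The open set $E_\epsilon$ is the disjoint union of its connected components, each an open interval; since such a component is a maximal subinterval of $\bigcup_n (\epsilon r_n, r_n)$, the ratio of its endpoints is at least $1/\epsilon$, so the associated spherical annuli all have conformal modulus bounded below by a constant depending only on $n$ and $\epsilon$ --- this bounded-below modulus is the ``room'' in which we shall change degrees. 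As $r_n \to \infty$, the set $E_\epsilon$ is unbounded; if some component of $E_\epsilon$ is a half-line $(\rho, \infty)$ then the constraint on $\M(h)$ concerns only the compact set $\{|x| \le \rho\}$ and the construction below simplifies, so we assume instead that $E_\epsilon$ has infinitely many components, all bounded --- whence the components of $\mathcal{G}$ are bounded too. We may then list the resulting partition of $\R^n$ into closed pieces as the ball $\{|x| \le \delta_0\}$, followed by transition annuli $Q_1, Q_2, \ldots$ (the components of $E_\epsilon$) alternating with ``Theorem~\ref{thm:1} annuli'' $R_1, R_2, \ldots$ (the remaining components of $\mathcal{G}$, any of which may degenerate to a sphere), nested and exhausting $\R^n$.

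Fix integers $d_0 < d_1 < \cdots \to \infty$, growing as slowly as we please (say $d_j = j+2$), and positive constants $c_0 < c_1 < \cdots \to \infty$, growing as fast as we need. On $\{|x| \le \delta_0\}$ and on each annulus $R_j$, let $h$ be the restriction of $c_j g_j$, where $g_j \colon \R^n \to \R^n$ is a map provided by Theorem~\ref{thm:1} of degree $d_j^{\,n-1}$, chosen $K_0$-quasiregular with $K_0 = K_0(n)$ and with $\M(g_j) = T$; as multiplication by a positive constant is a similarity, it changes neither the dilatation nor the maximum modulus set, so $\M(c_j g_j) = T$ as well. On each transition annulus $Q_j$, interpolate quasiregularly between $c_{j-1} g_{j-1}$ on its inner boundary sphere and $c_j g_j$ on its outer one. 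The essential point --- which uses both the exponent $n-1$ in Theorem~\ref{thm:1} and the explicit form of the maps constructed there --- is that those maps can be taken to act, on each sphere, as a ``winding'' that raises each of $n-1$ angular variables to the power $d_j$, and that interpolating from the $d_{j-1}$-th to the $d_j$-th power across an annulus of conformal modulus bounded below contributes, in each angular direction, a dilatation of size $1 + O\big( (d_j - d_{j-1})/d_{j-1} \big)$; the total dilatation thus stays bounded --- in fact tends to $1$ --- as $j \to \infty$. Near its boundary spheres the interpolant must reproduce $c_{j-1} g_{j-1}$ and $c_j g_j$ exactly, but in its interior it may disregard $T$ altogether, since $Q_j \subset S_\epsilon$; by composing also with a mild radial stretch in the target we may in addition arrange that $\inf_{x \in Q_j} |h(x)|$ is as large as we wish. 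Since spheres are removable for quasiregular maps of bounded dilatation, the patched map $h \colon \R^n \to \R^n$ is then $K$-quasiregular for some $K = K(n,\epsilon)$.

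It remains to read off the two conclusions. For every $r \ge 0$, both $M(r,h)$ and $\M(h) \cap S(r) = \{x \in S(r) : |h(x)| = M(r,h)\}$ depend only on $h|_{S(r)}$; and when $r \in \mathcal{G}$ this restriction equals $c_j\,(g_j|_{S(r)})$ for the appropriate $j$, so $\M(h) \cap S(r) = \M(c_j g_j) \cap S(r) = T \cap S(r)$. Taking the union over all $r \in \mathcal{G}$ yields $\M(h) \setminus S_\epsilon = T \setminus S_\epsilon$. Moreover, taking the constants $c_j$ to grow quickly enough forces $\log M(r,h)/\log r \to \infty$ as $r \to \infty$, and since a quasiregular map $f \colon \R^n \to \R^n$ of polynomial type satisfies $M(r,f) = O(r^N)$ for some $N \in \N$, it follows that $h$ is of transcendental type. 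The one genuinely delicate step is the quasiregular interpolation on the annuli $Q_j$ with dilatation bounded independently of $j$ while simultaneously controlling $M(r,h)$; this is what dictates the slow growth of $(d_j)$, and it rests on the winding structure inherited from Theorem~\ref{thm:1} together with the uniform lower bound for the moduli of the $Q_j$. The removability of spheres, the continuity matching, and the computation of the maximum modulus set are, by contrast, routine.
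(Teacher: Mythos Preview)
Your outline is in the right spirit, but the step you yourself flag as ``genuinely delicate'' --- the quasiregular interpolation on the transition annuli $Q_j$ with dilatation bounded independently of $j$ --- is precisely where the real content lies, and you have not carried it out. Producing a transcendental-type quasiregular map on $\R^n$ that behaves like a degree-$d_j^{\,n-1}$ power map on successive annuli while remaining $K$-quasiregular for a single $K$ is the substance of an entire paper (Drasin--Sastry, \emph{Rev.\ Mat.\ Iberoamericana} \textbf{19} (2003)). The paper does not reprove this: it quotes the Drasin--Sastry map $D$, post- and pre-composes by radial bi-Lipschitz maps so that the resulting $\widetilde{D}$ sends $S(r)$ onto $S(\Psi(r))$ for every $r\notin E_\epsilon$, and then sets $h=\widetilde{D}\circ h_1$, where $h_1$ is the single global quasiconformal ``squeeze'' (identity on $T\setminus S_\epsilon$, strictly modulus-decreasing elsewhere) built exactly as in the proof of Theorem~\ref{thm:1}. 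No gluing across annuli is needed at all.

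Your formulation also makes the interpolation harder than necessary. You take the full maps $g_j$ from Theorem~\ref{thm:1} --- each already a composition of a Mayer power map with the $T$-dependent squeeze $h_1$ --- and ask for an interpolant on $Q_j$ matching $c_{j-1}g_{j-1}$ and $c_jg_j$ on its boundary spheres. But $g_j|_{S(r)}$ is not a pure ``winding'': $h_1$ does not send $S(r)$ to a round sphere (it fixes $T\cap S(r)$ and pushes the rest strictly inward), so the boundary data for your interpolant depend on the arbitrary closed set $T$, and the power-map interpolation heuristic you sketch no longer applies cleanly. The paper sidesteps this by keeping the two ingredients separate: the $T$-dependent part is one globally defined quasiconformal map whose dilatation is bounded independently of $T$, and the degree-changing part is one $T$-independent transcendental-type map that is sphere-preserving off $S_\epsilon$. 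Once factored this way the argument is short; the verification that $\mathcal{M}(h)\setminus S_\epsilon = T\setminus S_\epsilon$ is then a two-line computation using only that $\widetilde{D}$ maps $S(r)$ onto $S(\Psi(r))$ for $r\notin E_\epsilon$ and that $\Psi$ is increasing.
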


\begin{remark}\normalfont
%The condition on the sequence $(r_n)_{n \in \N}$ is not particularly onerous. 
By choosing the sequence $(r_n)_{n \in \N}$ to grow sufficiently quickly, for example by setting $r_n =~\exp(e^n)$, we can ensure that the exceptional set $E_{\epsilon}$ has upper density zero, or even upper logarithmic density zero.
\end{remark}

\subsection*{Notation}
Fix $n\geq 2$. For $t\in \R$ we denote by $H_t$ the hyperplane 
\[
H_t \defeq \{ (x_1,\ldots, x_{n-1},t) : x_1,\ldots, x_{n-1} \in \R \}.
\]
For $x'\in \R^{n-1}$, we denote by $\lambda_{x'}$ the line 
\[
\lambda_{x'} \defeq \{ (x',t) : t\in \R \}.
\]
For $x\in \R^n$ and $r \geq 0$, the sphere centred at $x$ of radius $r$ is denoted by $S(x,r)$, and, as noted earlier, if $x=0$, we may use $S(r)$ for brevity. (Note that it is convenient to consider a singleton as a (degenerate) sphere of radius zero.) We also will need to use the maximum norm on $\R^n$ given by
\[ ||x||_{\infty} \defeq \max_{i\in \{1,\ldots, n\}} |x_i|,\]
with corresponding sphere 
\[ Q(r) \defeq \{ x\in \R^n : ||x||_{\infty} = r \}.\]
Finally, for $x\in \R^n$, we denote by $x_n$ the $n$-th coordinate of $x$. 

\subsection*{Acknowledgments}
We would like to thank Rod Halburd for organizing the \emph{CAvid} online seminar series, out of which this note arose.

%
%%%
%
\section{Certain quasiregular mappings}
Various classes of quasiregular maps have been constructed in the literature. We briefly recall three such maps that are central to our constructions.

\subsection{Zorich mappings}
\label{sec:Z}
The class of Zorich mappings provides a well-known generalisation of the exponential function in the plane. 
These maps are strongly automorphic with respect to a discrete group $G$ of isometries. This means that if $\mathcal{Z}$ is such a map, then $\mathcal{Z}(g(x)) = \mathcal{Z}(x)$ for all $g\in G $ and all $x\in \R^n$ and, moreover, if $\mathcal{Z}(x) = \mathcal{Z}(y)$ then $y=g(x)$ for some $g\in G$.

For each $n \geq 2$, we will fix one particular Zorich map, which we denote simply by $\mathcal{Z}$, with the following properties. Here, for a quasiregular map $f$, we denote by $\mathcal{B}_f$ the \emph{branch set} of $f$; in other words the set of points at which $f$ is not locally a homeomorphism.
\begin{enumerate}[(i)]
\item For each $t \in \R$,  $\mathcal{Z}$ maps the hyperplane $H_t$ onto the sphere $S(e^t)$.\label{spheres}
\item The group $G$ is generated by a translation subgroup of rank $n-1$ and a finite group of rotations about the origin such that each $g\in G$ preserves the $n$-th coordinate.
\item A fundamental set $B$ for the action of $G$ has closure given by a beam $\Omega \times \R$, where $\Omega \subset \R^{n-1}$ is a cuboid.
\item The branch set $\mathcal{B}_{\mathcal{Z}}$ is equal to the edges of the beam $\Omega \times \R$, and so consists of $(n-2)$-dimensional hyperplanes.\label{branch}
\end{enumerate}

For an explicit formula for such a Zorich map, together with its associated group $G$, we refer to, for example, \cite[Section 3]{FP}.

\subsection{Power mappings}
\label{sec:power}
The Zorich mappings can be used to generalise the power maps. Let $A(x) \defeq dx$ for a positive integer $d \geq 2$. Then, with $\mathcal{Z},G$ as above, we have $AGA^{-1} \subset G$. It follows that the Schr\"oder equation 
\[
P\circ \mathcal{Z} = \mathcal{Z} \circ A
\]
has a unique quasiregular solution $P$ (this solution is actually uniformly quasiregular, but we do not need this fact here). This solution is a quasiregular analogue of the power map $z \mapsto z^d$ in the plane, since in this case the Schr\"oder equation is just $P(e^z) = e^{dz}$. 

These quasiregular power maps were constructed by Mayer \cite{Mayer} and have the following properties:
\begin{enumerate}[(i)]
\item $P$ is of degree $d^{n-1}$, and hence is of polynomial-type.
\item For each $r \geq 0$, $P$ maps the sphere $S(r)$ onto the sphere $S(r^d)$.
\end{enumerate}

\subsection{Transcendental-type mappings of slow growth}
\label{sec:DS}
Our final quasiregular map is less well-known. Drasin and Sastry constructed in \cite{DS} a transcendental-type quasiregular map $D:\R^n \to \R^n$ with the property that $D$ behaves like a power mapping on large ring domains. Necessarily the degree of the power mappings has to increase monotonically to obtain a transcendental-type map. We briefly recall some of the elements of their construction that are important for us; see also \cite[Section 3]{NS} for further discussion.

Suppose that $(r_n)_{n\in \N}$ is a sequence of positive real numbers tending to infinity. Let $\nu :\R^+ \to \R^+$ be the piecewise linear map satisfying $\nu(r_n) = n$, for $n \in \N$, and with $\nu'(x)$ defined for $x \in \R^+ \setminus \{r_n : n\in \N\}$. By replacing $(r_n)_{n\in \N}$ with a subsequence if necessary, we may assume that $\nu$ is slowly growing; in particular that it satisfies the conditions of \cite[Theorem 2.1]{DS}. Define
\begin{equation}
\label{eq:Psi} 
\Psi(r) \defeq \exp \left ( \int_1^r \frac{\nu(t) }{t} \: dt \right ).
\end{equation}
Let $E_{\epsilon}$ be as defined in \eqref{Eeq}.

Drasin and Sastry construct a PL version of the sphere $S(1)$, which we denote by $\Delta$. This set has the property that it is the boundary of a convex polyhedron, and that 
\[ \max _{x\in \Delta} |x| = 1.\]
We refer to \cite[p.758-759]{DS} for the precise construction of $\Delta$, which we stress is independent of the sequence $(r_n)_{n \in \N}$ or the choice of $\epsilon$. For $r \geq 0$ we denote by $\Delta(r)$ the topological sphere
\[ \Delta(r) \defeq \{ x\in \R^n : x=ry, y\in \Delta \}.\]

Then $D$ has the property that for each maximal interval $I \subset \R^+ \setminus E_{\epsilon}$, $D$ maps $Q(r)$, for $r\in I$, onto $\Delta(s)$ with $s=\Psi(r)$, recalling \eqref{eq:Psi}. In particular, $D$ behaves like a power mapping on $\{Q(r) : r\in I\}$.

\section{Modifying quasiregular maps}

\subsection{Smoothing out the Drasin-Sastry construction}
Let $(r_n)_{n \in \N}, \Psi, D, \epsilon$, and $E_\epsilon$ be as in Section~\ref{sec:DS}. The Drasin-Sastry map $D$ sends the topological sphere $Q(r)$ onto the topological sphere $\Delta(s)$ for each $r\notin E_{\epsilon}$. To construct specific maximum modulus sets, we need to ``smooth'' this out, so that round spheres are mapped onto round spheres. The first step is straightforward.
\begin{lemma}
\label{lem:3}
There exist bi-Lipschitz maps $\alpha_Q : S(1) \to Q(1)$  and $\alpha_{\Delta}:S(1) \to~\Delta(1)$.
\end{lemma}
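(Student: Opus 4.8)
The plan is to construct each bi-Lipschitz homeomorphism $S(1) \to Q(1)$ and $S(1) \to \Delta(1)$ by a single global formula, namely radial projection from the origin. Since $Q(1)$ is the boundary of the cube $[-1,1]^n$, every ray from the origin meets $Q(1)$ in exactly one point; define $\alpha_Q(x) \defeq x/\|x\|_\infty$ for $x \in S(1)$. This is clearly a bijection onto $Q(1)$, with inverse $y \mapsto y/|y|$. Similarly, $\Delta(1)$ is by construction the boundary of a convex polyhedron; the origin lies in its interior (this follows since $\Delta$ is a PL version of $S(1)$ with $\max_{x \in \Delta}|x| = 1$, so in particular $\Delta$ is star-shaped about $0$ with the ray through each direction meeting $\Delta$ once), so radial projection $\alpha_\Delta(x) \defeq \rho(x/|x|)\,x/|x|$ is again a well-defined bijection $S(1) \to \Delta(1)$, where $\rho(u)$ is the unique positive scalar with $\rho(u)u \in \Delta(1)$.

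The first substantive step is to show $\alpha_Q$ is bi-Lipschitz. Both $S(1)$ and $Q(1)$ are compact, and the maps $x \mapsto x/\|x\|_\infty$ (on $S(1)$) and $y \mapsto y/|y|$ (on $Q(1)$) are smooth off a measure-zero set, but the cleanest argument avoids smoothness: write $Q(1)$ as the union of its $2n$ closed faces, each of which is a flat $(n-1)$-cell, and observe that on the portion of $S(1)$ mapping into a single face the map $\alpha_Q$ and its inverse are restrictions of real-analytic maps to compact sets, hence Lipschitz there; a Lipschitz bound that is uniform across the finitely many faces then gives a global Lipschitz bound, using that a continuous map between compact metric spaces which is Lipschitz on each piece of a finite closed cover is globally Lipschitz (with constant depending on the pieces and the geometry of the cover, since points in different pieces can be joined by a bounded-length path of bounded-length subarcs, or more simply by comparing geodesic and chordal distance on the sphere). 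The same argument applies verbatim to $\alpha_\Delta$ once one notes that $\Delta(1)$ has finitely many flat faces (it is the boundary of a polyhedron) and that $0$ is in the open kernel, so the radial distance function $\rho$ and its reciprocal are bounded above and below by positive constants and are piecewise real-analytic.

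The main obstacle, such as it is, is purely one of bookkeeping: verifying that ``piecewise bi-Lipschitz on a finite closed polyhedral cover, globally continuous'' upgrades to ``globally bi-Lipschitz.'' For the forward direction this is immediate from compactness (maximize the local constants). For the inverse direction one must rule out the possibility that two points close together in the target are far apart in the source; here one uses that both spaces are compact, connected, and the map is a homeomorphism, together with a standard covering/chaining argument that a local bi-Lipschitz homeomorphism between compact connected manifolds is globally bi-Lipschitz. Since the faces are flat and the cover is finite, no curvature or dimension subtleties arise, so this is routine. I would present the lemma's proof by giving the explicit radial formulas, remarking that they are smooth and nondegenerate on each of the finitely many flat faces, and then invoking the compactness-and-chaining principle to conclude.
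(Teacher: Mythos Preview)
Your approach is correct and is precisely the one the paper has in mind: the paper's entire proof reads ``Both $\alpha_Q$ and $\alpha_{\Delta}$ can be easily constructed via radial maps; we omit the detail,'' and your radial-projection formulas together with the piecewise-smooth-on-faces plus compactness argument supply exactly those omitted details. One minor simplification: since the inverse maps are also given by explicit radial formulas ($y\mapsto y/|y|$), you can verify the Lipschitz bound for the inverse directly by the same face-by-face argument rather than appealing to a general chaining principle.
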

\begin{proof}
Both $\alpha_Q$ and $\alpha_{\Delta}$ can be easily constructed via radial maps; we omit the detail.
\end{proof}
We then use Lemma~\ref{lem:3} to construct the following quasiregular map.
\begin{corollary}
\label{cor:1}
There exists a quasiregular map $\widetilde{D} :\R^n \to \R^n$ with the following property. If $r \in \R^+ \setminus E_{\epsilon}$, then $\widetilde{D}$ maps $S(r)$ onto $S(s)$, where $s=\Psi(r)$.
\end{corollary}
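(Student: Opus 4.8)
The plan is to obtain $\widetilde{D}$ by conjugating the Drasin--Sastry map $D$ of Section~\ref{sec:DS} with two bi-Lipschitz self-homeomorphisms of $\R^n$, each obtained by radially extending (``coning off'') one of the maps from Lemma~\ref{lem:3}. First I would define $\Phi \colon \R^n \to \R^n$ by $\Phi(0) \defeq 0$ and $\Phi(x) \defeq |x|\,\alpha_Q(x/|x|)$ for $x \neq 0$. Since $\alpha_Q \colon S(1) \to Q(1)$ is bi-Lipschitz, $\Phi$ is bi-Lipschitz: away from the origin this is the standard fact that a bi-Lipschitz map between two ``Lipschitz spheres'' enclosing the origin extends radially to a bi-Lipschitz map, and at the origin one argues directly using $\|\Phi(x)\| \asymp |x|$. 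By construction $\Phi(S(r)) = Q(r)$ for every $r \geq 0$. Next I would build a bi-Lipschitz map $\Theta \colon \R^n \to \R^n$ with $\Theta(\Delta(s)) = S(s)$ for every $s \geq 0$: since, as part of its construction, the PL sphere $\Delta$ bounds a convex polyhedron containing the origin in its interior, $\Delta$ is star-shaped about $0$, so each $y \neq 0$ lies in a unique $\Delta(s)$; setting $\Theta(0) \defeq 0$ and $\Theta(y) \defeq s\,\alpha_{\Delta}^{-1}(y/s)$ when $y \in \Delta(s)$ yields a homeomorphism whose inverse is the radial extension $x \mapsto |x|\,\alpha_{\Delta}(x/|x|)$ of $\alpha_\Delta$, hence bi-Lipschitz by Lemma~\ref{lem:3} and the same coning argument. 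Since $|\Theta(y)| = s$ for $y \in \Delta(s)$, indeed $\Theta(\Delta(s)) = S(s)$.

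Then I would set $\widetilde{D} \defeq \Theta \circ D \circ \Phi$. Being a composition of the quasiregular map $D$ with the bi-Lipschitz (in particular quasiconformal) maps $\Phi$ and $\Theta$, the map $\widetilde{D}$ is quasiregular on $\R^n$, with maximal dilatation bounded by the product of those of $\Phi$, $D$ and $\Theta$. For $r \in \R^+ \setminus E_{\epsilon}$, the property of $D$ recalled in Section~\ref{sec:DS} gives $D(Q(r)) = \Delta(\Psi(r))$, and hence
\[
\widetilde{D}(S(r)) \;=\; \Theta\big(D(\Phi(S(r)))\big) \;=\; \Theta\big(D(Q(r))\big) \;=\; \Theta\big(\Delta(\Psi(r))\big) \;=\; S(\Psi(r)),
\]
which is the required mapping property. (As $r \to 0^{+}$ both $Q(r)$ and $\Delta(\Psi(r))$ shrink to $\{0\}$, so continuity forces $D(0) = 0$, and thus $\widetilde{D}(0) = 0$, so that $\widetilde{D}$ is genuinely defined and continuous on all of $\R^n$.)

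The only substantive point is the verification that the two radial extensions are bi-Lipschitz on the whole of $\R^n$, together with the well-definedness of $\Theta$, i.e.\ that $\Delta$ is star-shaped with respect to the origin --- a feature we read off from the Drasin--Sastry construction of $\Delta$. I expect this to be the main (though routine) obstacle; once it is in place, the mapping property of $\widetilde{D}$ is immediate, which is presumably why the result is stated as a corollary rather than a theorem.
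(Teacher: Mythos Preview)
Your proposal is correct and is essentially identical to the paper's own proof: the paper also radially extends $\alpha_Q$ and $\alpha_{\Delta}$ to bi-Lipschitz (hence quasiconformal) self-maps of $\R^n$, invoking that $Q(1)$ and $\Delta$ are starlike about the origin, and then sets $\widetilde{D} \defeq \alpha_{\Delta}^{-1} \circ D \circ \alpha_Q$, which is exactly your $\Theta \circ D \circ \Phi$. The only differences are notational and the level of detail you supply about the coning argument and the behaviour at the origin.
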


\begin{proof}
We radially extend $\alpha_Q$ to a global map by setting 
\[
\alpha_Q(x) \defeq \begin{cases}
|x| \alpha_Q\left( \frac{x}{|x|}\right), &\text{for }x\neq 0, \\
0, &\text{otherwise},
\end{cases}
\] 
and we make the similar extension for $\alpha_{\Delta}$. By an abuse of notation we also call the extended maps $\alpha_Q$ and $\alpha_{\Delta}$. Since each original map is bi-Lipschitz on $S(1)$, and since $Q(1)$ and $\Delta$ are starlike about the origin,  these extended maps are also bi-Lipschitz, and hence quasiconformal.

Then we define a map 
\[
\widetilde{D} \defeq \alpha_{\Delta}^{-1} \circ D \circ \alpha_Q.
\]
By construction, it is evident that $\widetilde{D}$ maps $S(r)$ onto $S(\Psi(r))$ and, as a composition of quasiregular maps, it is quasiregular itself.
\end{proof}

\subsection{A quasiregular map which approximates the identity map}
In this section our goal is to construct a new quasiregular map which, roughly speaking, we will use to decrease the modulus of a quasiregular map in a domain.

For a proper subdomain $U$ of $\R^n$, for $n\geq 2$, we denote by $d_U(x)$ the Euclidean distance from $x$ to $\partial U$. We will use the following elementary lemma.

\begin{lemma}
\label{lem:p}
With $U$ as above, the function $p : U \to \R^+$ given by 
\[ p_U(x) \defeq \frac{d_U(x)}{1+d_U(x)}\] 
is $1$-Lipschitz.
\end{lemma}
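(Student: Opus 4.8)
The plan is to exhibit $p_U$ as a composition of two $1$-Lipschitz functions, namely the distance function $d_U$ followed by the scalar function $t \mapsto t/(1+t)$, and then invoke the fact that a composition of $1$-Lipschitz maps is $1$-Lipschitz.

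First I would record the standard observation that $d_U$ is itself $1$-Lipschitz on $U$. Since $U$ is a \emph{proper} subdomain, $\partial U \neq \emptyset$, so $d_U(x) = \dist(x,\partial U)$ is finite for every $x \in U$. For $x,y \in U$ and any $w \in \partial U$ we have $d_U(x) \le |x - w| \le |x - y| + |y - w|$; taking the infimum over $w \in \partial U$ gives $d_U(x) \le |x - y| + d_U(y)$, and by symmetry $|d_U(x) - d_U(y)| \le |x - y|$.

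Next I would analyse the real function $g : [0,\infty) \to [0,1)$ defined by $g(t) = \tfrac{t}{1+t} = 1 - \tfrac{1}{1+t}$. It is differentiable with $g'(t) = \tfrac{1}{(1+t)^2} \in (0,1]$ for all $t \ge 0$, so by the mean value theorem $g$ is $1$-Lipschitz (indeed a strictly increasing weak contraction) on $[0,\infty)$. Since $d_U$ takes values in $[0,\infty)$, the composition $p_U = g \circ d_U$ is well defined, and for $x,y \in U$
\[ |p_U(x) - p_U(y)| = |g(d_U(x)) - g(d_U(y))| \le |d_U(x) - d_U(y)| \le |x - y|, \]
which is the assertion. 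I do not expect any genuine obstacle here: the two Lipschitz estimates are both routine, and the lemma is purely a packaging of them.
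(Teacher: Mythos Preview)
Your proof is correct and follows essentially the same approach as the paper: both factor $p_U$ as the composition of the $1$-Lipschitz distance function $d_U$ with the scalar map $t \mapsto t/(1+t)$, observe that the latter has derivative bounded by $1$ on $[0,\infty)$, and conclude that the composition is $1$-Lipschitz. You give slightly more detail (the triangle-inequality argument for $d_U$ and an explicit appeal to the mean value theorem), but the structure is identical.
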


\begin{proof}
It is well-known that $d_U$ is a $1$-Lipschitz function. It is easy to check that the function $h(t) = \frac{t}{1+t}$  has derivative at most $1$ for $t\geq 0$. The lemma follows since $p_U = h\circ d_U$.
\end{proof}

A key tool is the following lemma, in which the image of every point in a domain is moved in a certain direction.

\begin{lemma}
\label{lem:1}
Let $n\geq 2$. Then there exists $K>1$, independent of $n$, with the following property. If $U\subsetneq \R^n$ is a domain, then there exists a $K$-quasiconformal surjection $f_U :U \to U$ such that
\[
f_U(x)_n < x_n, \quad\text{for } x \in U,
\]
and $f_U$ extends to the identity on $\partial U$. Moreover, $f_U$ preserves all maximal line segments in $U$ parallel to the $n$-th coordinate axis.
\end{lemma}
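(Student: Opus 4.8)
The plan is to build $f_U$ explicitly as a shear in the $n$-th coordinate direction, with the size of the shear governed by the distance function $d_U$ so that it dies out at $\partial U$. Concretely, for $x = (x', x_n) \in U$ I would set
\[
f_U(x) \defeq \bigl(x', \, x_n - \varphi(p_U(x))\bigr),
\]
where $p_U$ is the $1$-Lipschitz function of Lemma~\ref{lem:p} and $\varphi : [0, 1) \to \R^+$ is a fixed, smooth, strictly increasing bump-type function with $\varphi(0) = 0$, $\varphi'(0) = 0$, and $\varphi$ bounded together with a small Lipschitz constant, say $\|\varphi'\|_\infty < 1/2$. Since $p_U(x) \to 0$ exactly as $x \to \partial U$, this map extends continuously to the identity on $\partial U$; and since $\varphi > 0$ on the interior, we get $f_U(x)_n < x_n$ for $x \in U$. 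Because only the last coordinate is changed and $x'$ is untouched, $f_U$ clearly maps each maximal vertical line segment $\lambda_{x'} \cap U$ into itself.

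The two things that need genuine checking are (a) that $f_U$ is a bijection of $U$ onto itself, and (b) that it is $K$-quasiconformal with $K$ independent of $U$ (and of $n$). For (a): on each vertical line the map has the form $t \mapsto t - \varphi(p_U(x', t))$, and since $t \mapsto p_U(x',t)$ is $1$-Lipschitz and $\varphi$ has Lipschitz constant $< 1/2$, the composite $t \mapsto \varphi(p_U(x',t))$ is $1/2$-Lipschitz in $t$, so $t \mapsto t - \varphi(p_U(x',t))$ is strictly increasing on the (open) interval that is the fiber of $U$ over $x'$; a boundary-behaviour argument (the shear vanishes at the ends of the fiber) then shows this restriction is a surjection of the fiber onto itself, giving bijectivity of $f_U$ overall. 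For (b): $f_U$ is Lipschitz (indeed bi-Lipschitz, by the same Lipschitz estimates run in both directions) hence differentiable a.e., and away from the non-smooth locus of $d_U$ its derivative is $I + v\, e_n^{T}$ where $v = -\varphi'(p_U(x))\,\nabla p_U(x)$ has $|v| \le \|\varphi'\|_\infty < 1/2$. A rank-one perturbation of the identity of norm $< 1/2$ has all singular values in a fixed interval bounded away from $0$ and $\infty$, so its linear dilatation is bounded by an absolute constant $K$; since this estimate is uniform over all points, all domains $U$, and all dimensions $n$, we obtain the claimed uniform $K$.

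The main obstacle I expect is handling the points where $d_U$ fails to be differentiable — the medial axis of $U$ — and more generally making the quasiconformality argument rigorous without assuming smoothness of $\partial U$. The clean way around this is to avoid pointwise derivative computations at bad points altogether: one shows directly that $f_U$ is bi-Lipschitz with constants depending only on $\|\varphi'\|_\infty$ (using that $p_U$ is $1$-Lipschitz and $f_U$ is a triangular map that is a strictly monotone $1$-Lipschitz-perturbation of the identity in the last variable), and then invokes the standard fact that a bi-Lipschitz homeomorphism of a domain in $\R^n$ is quasiconformal with dilatation controlled by the bi-Lipschitz constant. This sidesteps the medial axis entirely. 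One should also double-check the degenerate possibility that the fiber of $U$ over some $x'$ is unbounded or empty: if empty there is nothing to do, and if a half-line or all of $\R$ the same monotonicity argument still applies since $\varphi$ is bounded, so the shear never destroys surjectivity along that fiber.
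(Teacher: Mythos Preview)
Your approach is correct and essentially identical to the paper's: the paper simply takes $\varphi(t)=t/2$ from the outset (no auxiliary bump function is needed), so that $f_U(x)=(x',\,x_n-\tfrac12 p_U(x))$, and then checks injectivity, surjectivity along vertical fibers, and quasiconformality exactly as you describe. For the last step the paper does not go via the general ``bi-Lipschitz $\Rightarrow$ quasiconformal'' fact but computes directly $L_{f_U}(x,r)\le 3r/2$ and $\ell_{f_U}(x,r)\ge r/2$ from the $1$-Lipschitz property of $p_U$ and invokes Gehring's theorem; this is the same estimate you would get, just phrased in terms of the metric definition of linear distortion.
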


\begin{proof}
We define $f_U$ via
\[ 
f_U(x_1,\ldots,x_{n-1} ,  x_n) \defeq \left (x_1,\ldots, x_{n-1}, x_n - \frac{p_U(x)}{2} \right ),
\]
recalling $p_U$ from Lemma \ref{lem:p}.
Clearly $f_U$ satisfies the last two conclusions of the lemma, and so we need to show that $f_U$ is a quasiconformal surjection. 

First we show that $f_U$ is surjective. To see this, if $L \subset U\cap \lambda_{x'}$ is any bounded maximal line segment oriented in the $n$-th coordinate direction, then $f_U$ extends to the identity on the two endpoints of the closure of $L$. Since $f_U(L) \subset L$, and $f_U$ is continuous, the Intermediate Value Theorem implies that $f_U(L) = L$. If $L$ is unbounded, then since $p_U(x) \leq 1$ for all $x\in U$, it follows that
\[ \lim_{x_n \to \pm \infty, x\in L} f_U(x)_n \to \pm \infty,\]
with the appropriate parity. Again we conclude that $f_U(L) = L$.

Next we show that $f_U$ is injective. Suppose, by way of contradiction, that there exist $x, y \in U$ with $x\neq y$ and $f_U(x) = f_U(y)$. Then we have equality in the first $n-1$ coordinates and $x_n - p_U(x)/2 = y_n - p_U(y)/2$. Hence by Lemma \ref{lem:p}
\[
|x-y| = |x_n - y_n| = \frac{|p_U(x) - p_U(y)|}{2} \leq \frac{|x-y|}{2},
\]
which is a contradiction. 

Finally, to see that $f_U$ is quasiconformal, let $x\in U$ and let $r>0$ be small. Denoting by $L_f(x,r)$ the maximum of $|f(x) - f(y)|$ such that $|x-y| =r$ and $\ell_f(x,r)$ the corresponding minimum, one can see by using the fact that $p_U(x)$ is $1$-Lipschitz that
\[
L_f(x,r) \leq \frac{3r}{2},\quad\text{and}\quad \ell_f(x,r) \geq \frac{r}{2}.
\]
Hence $\limsup_{r\to 0} L_f(x,r) / \ell_f(x,r) \leq 3$. Since $x$ here is arbitrary, it follows that $f_U$ has linear distortion bounded everywhere by $3$. Hence, by \cite[Corollary 4]{Gehring}, $f_U$ is $K$-quasiconformal for some $K$ independent of $U$. 
\end{proof}

We apply Lemma~\ref{lem:1} to domains in $\R^n \setminus \{ 0 \}$ in which points are moved radially towards the origin.
First, we need a result on Zorich maps.

\begin{lemma}
\label{lem:zorich}
Let $U\subsetneq \R^n \setminus \{ 0\}$ be a domain. If $\widetilde{U}$ is any connected component of $\mathcal{Z}^{-1}(U)$, then $\mathcal{Z} :\widetilde{U} \to U$ is a surjection.
\end{lemma}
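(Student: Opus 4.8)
The plan is to prove surjectivity directly, by lifting paths through $\mathcal{Z}$. Since $U \neq \emptyset$ and $\mathcal{Z}$ is surjective onto $\R^n \setminus \{0\} \supseteq U$, the set $\mathcal{Z}^{-1}(U)$, and hence each of its components, is non-empty; fix $x_0 \in \widetilde{U}$ and put $y_0 \defeq \mathcal{Z}(x_0) \in U$. Given an arbitrary $y \in U$, I want to produce a point of $\widetilde{U}$ in $\mathcal{Z}^{-1}(y)$. As $U$ is a domain it is path-connected, so choose a path $\gamma \colon [0,1] \to U$ with $\gamma(0) = y_0$ and $\gamma(1) = y$. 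Since $\mathcal{Z}$ is open and discrete, the path-lifting theory for such maps (see \cite{Rickman}) gives a maximal lift $\alpha \colon [0,c) \to \R^n$ of $\gamma|_{[0,1)}$ with $\alpha(0) = x_0$ and $\mathcal{Z} \circ \alpha = \gamma|_{[0,c)}$, for some $c \in (0,1]$; moreover, if $c < 1$ then $\alpha$ leaves every compact subset of $\R^n$ as $t \to c^-$, so in particular $|\alpha(t)| \to \infty$.

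The crux of the argument --- and the step I expect to be the only real obstacle --- is to show that this escape to infinity cannot occur, and this is where the automorphic structure of $\mathcal{Z}$ is used. The image $\gamma([0,1])$ is a compact subset of $\R^n \setminus \{0\}$, so it lies in a closed shell $\{ 1/R \le |x| \le R \}$; since $\mathcal{Z}$ maps the hyperplane $H_t$ onto $S(e^t)$, it follows that $\mathcal{Z}^{-1}(\{ 1/R \le |x| \le R \})$ is contained in the slab $\{ |x_n| \le \log R \}$, so the $n$-th coordinate of $\alpha(t)$ stays bounded and any escape of $\alpha$ must occur in the first $n-1$ coordinates. Let $y^* \defeq \gamma(c)$, which is defined because $c \le 1$ and $\gamma$ is defined on $[0,1]$. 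Because $\mathcal{Z}$ is strongly automorphic with respect to $G$, whose translation subgroup has rank $n-1$ and which has a fundamental set whose closure is the beam $\Omega \times \R$ with $\Omega$ a cuboid, $G$ acts cocompactly (by isometries) on each slab $\{ |x_n| \le \log R \}$. Consequently the preimage $\mathcal{Z}^{-1}(y^*)$ is a single $G$-orbit, and a standard normal-neighbourhood argument for open discrete maps (see \cite{Rickman}), combined with this equivariance, shows that for all sufficiently small $\epsilon > 0$ every connected component of $\mathcal{Z}^{-1}(B(y^*,\epsilon))$ is a bounded set, of diameter tending to $0$ as $\epsilon \to 0$. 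Now for $t$ close enough to $c$ we have $\gamma(t) \in B(y^*,\epsilon)$, so $\alpha$ maps a terminal sub-interval $[t_0, c)$ into $\mathcal{Z}^{-1}(B(y^*,\epsilon))$; since $[t_0,c)$ is connected, $\alpha([t_0,c))$ lies in a single one of these bounded components. This rules out $|\alpha(t)| \to \infty$, so $c = 1$, and it shows that $\alpha([t_0,1))$ lies in a component $V_0$ of $\mathcal{Z}^{-1}(B(y,\epsilon))$ with $\overline{V_0}$ compact.

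To finish, choose $t_k \to 1$; by compactness of $\overline{V_0}$, some subsequence of $(\alpha(t_k))$ converges to a point $x^* \in \overline{V_0}$, and continuity of $\mathcal{Z}$ gives $\mathcal{Z}(x^*) = \lim_k \gamma(t_k) = y$. The set $\alpha([0,1))$ is connected, contains $x_0 \in \widetilde{U}$, and is carried into $U$ by $\mathcal{Z}$ (since $\gamma([0,1)) \subseteq U$), so it lies in the component $\widetilde{U}$ of $\mathcal{Z}^{-1}(U)$; hence $x^* \in \overline{\widetilde{U}}$. As $\mathcal{Z}(x^*) = y \in U$ and $U$ is open, $x^*$ has a connected open neighbourhood $N$ with $\mathcal{Z}(N) \subseteq U$; this $N$ meets $\widetilde{U}$, being a neighbourhood of a limit point of $\alpha([0,1)) \subseteq \widetilde{U}$, so $N \subseteq \widetilde{U}$ and in particular $x^* \in \widetilde{U}$. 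Therefore $y = \mathcal{Z}(x^*) \in \mathcal{Z}(\widetilde{U})$, and since $y \in U$ was arbitrary, $\mathcal{Z} \colon \widetilde{U} \to U$ is a surjection.
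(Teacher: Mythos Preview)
Your proof is correct and follows the same path-lifting strategy as the paper, but the two differ in execution. The paper first picks the basepoint $w$ outside the branch set $\mathcal{B}_{\mathcal{Z}}$ and engineers the path $\gamma$ so that $\gamma([0,1)) \cap \mathcal{Z}(\mathcal{B}_{\mathcal{Z}}) = \emptyset$; this lets the lift be built by hand from local inverse branches of $\mathcal{Z}$, with no appeal to the general open--discrete lifting theory. You instead invoke Rickman's maximal-lift machinery directly, which absorbs the branch set for free at the cost of citing a heavier tool. The more substantive difference concerns non-escape of the lift. The paper is terse here: it simply observes that the accumulation set $Y$ of the lifted path lies in $\mathcal{Z}^{-1}(y)$ and, by openness and discreteness, must be a single point---implicitly taking for granted that $Y\ne\emptyset$. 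You spell this out, using property~\eqref{spheres} of $\mathcal{Z}$ to confine the lift to a slab $\{|x_n|\le \log R\}$ and then the $G$-equivariance (cocompact action on the slab, preimage of $y^*$ a single $G$-orbit) to obtain uniformly small normal neighbourhoods and hence a bounded component $V_0$ trapping the tail of the lift. Your argument is longer, but it isolates exactly where the automorphic structure of $\mathcal{Z}$ does the work; the paper's version is more economical but leaves that step to the reader.
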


\begin{proof}
%Denote by $\mathcal{B}_{\mathcal{Z}}$ the branch set of $\mathcal{Z}$. Since $\mathcal{B}_{\mathcal{Z}}$ consists of $(n-2)$-dimensional hyperplanes
By Property~\eqref{branch} of the map $\mathcal{Z}$, we can find $w\in \widetilde{U} \setminus \mathcal{B}_{\mathcal{Z}}$. Set $x = \mathcal{Z}(w)$ and suppose $y\in U$. Since $U$ is a domain, we can find a path $\gamma:[0,1] \to U$ with $\gamma(0) = x$, $\gamma(1) = y$ with the extra condition that $\gamma \cap \mathcal{Z} ( \mathcal{B}_{\mathcal{Z}} )$ is either empty or, possibly, $y$.

It follows that for every $z\in \gamma \setminus \{ y \}$, there exists a neighbourhood $U_z$ and a branch of the inverse $\mathcal{Z}^{-1}$ from $U$ into $\widetilde{U}$. Hence we can uniquely lift $\gamma :[0,1) \to U$ to a path $\widetilde{\gamma}: [0,1) \to \widetilde{U}$. If $Y$ is the accumulation set 
\[ Y {\color{red} \defeq}  \bigcap_{0<t<1} \overline{ \{ \widetilde{\gamma}(s) : s>t \} } ,\]
then, by continuity, any $u\in Y$ must satisfy $\mathcal{Z}(u) = y$. Since quasiregular mappings are open and discrete, it follows that $Y$ must consist of one point, say $u$, in $\widetilde{U}$. Since $\mathcal{Z}(u) = y$, the lemma is proved.
\end{proof}

\begin{corollary}
\label{cor:2}
Let $n\geq 2$. Then there exists $K'>1$ with the following property. If $U \subsetneq \R^n \setminus \{ 0 \}$ is a domain, then there exists a $K'$-quasiconformal map $h_U : U \to U$ such that 
\[
|h_U(x)| < |x|, \quad\text{for } x\in U,
\]
and $h_U$ extends to the identity on $\partial U$.
\end{corollary}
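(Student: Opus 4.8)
The plan is to conjugate the map $f_U$ of Lemma~\ref{lem:1} by the Zorich map $\mathcal{Z}$. The mechanism is Property~\eqref{spheres}: since $\mathcal{Z}$ sends the hyperplane $H_t$ onto $S(e^t)$, we have $|\mathcal{Z}(z)| = e^{z_n}$, so a map that strictly decreases the $n$-th coordinate is conjugated by $\mathcal{Z}$ to a map that strictly decreases the modulus. For the set-up: since $\mathcal{Z}$ maps $\R^n$ onto $\R^n \setminus \{0\}$ and $U \subsetneq \R^n \setminus \{0\}$, the set $\mathcal{Z}^{-1}(U)$ is a nonempty open proper subset of $\R^n$; I would fix one of its connected components $\widetilde{U}$, a domain with $\widetilde{U} \subsetneq \R^n$. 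By Lemma~\ref{lem:zorich}, $\mathcal{Z}\colon \widetilde{U} \to U$ is a surjection, and from the strong automorphy of $\mathcal{Z}$ (together with the fact that $\widetilde{U}$ is a single component) one gets, for $x,y\in\widetilde{U}$, that $\mathcal{Z}(x)=\mathcal{Z}(y)$ precisely when $y=g(x)$ for some $g$ in $\Gamma\defeq\{g\in G : g(\widetilde{U})=\widetilde{U}\}$.

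Applying Lemma~\ref{lem:1} to $\widetilde{U}$ yields a $K$-quasiconformal surjection $F\defeq f_{\widetilde{U}}\colon\widetilde{U}\to\widetilde{U}$ with $F(x)_n<x_n$ and $F|_{\partial\widetilde{U}}=\mathrm{id}$. I would then check that $F$ is $\Gamma$-equivariant: each $g\in\Gamma\le G$ fixes the $n$-th coordinate and acts as an affine isometry of $\R^{n-1}$ on the first $n-1$ coordinates (recall $G$ is generated by translations and rotations about the origin, all preserving the $n$-th coordinate), while $\widetilde{U}$ is $\Gamma$-invariant, so $d_{\widetilde{U}}$, and hence $p_{\widetilde{U}}$, is $\Gamma$-invariant; since $F$ modifies only the $n$-th coordinate, and only by $-p_{\widetilde{U}}/2$, it follows that $F\circ g=g\circ F$ on $\widetilde{U}$. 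Therefore the formula $h_U(\mathcal{Z}(x))\defeq\mathcal{Z}(F(x))$, $x\in\widetilde{U}$, well-defines a map on $U$: the value is independent of the chosen preimage (by equivariance), lies in $U$ (as $F(\widetilde{U})=\widetilde{U}$), and is defined at every point of $U$ (by surjectivity of $\mathcal{Z}\colon\widetilde{U}\to U$). Moreover $h_U$ is continuous (since $\mathcal{Z}\colon\widetilde{U}\to U$ is an open continuous surjection, hence a quotient map), injective (by equivariance and injectivity of $F$), and surjective (as $h_U(U)=\mathcal{Z}(F(\widetilde{U}))=\mathcal{Z}(\widetilde{U})=U$); by invariance of domain it is a homeomorphism of $U$ onto itself.

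The three conclusions then follow. First, $|h_U(\mathcal{Z}(x))|=|\mathcal{Z}(F(x))|=e^{F(x)_n}=e^{x_n-p_{\widetilde{U}}(x)/2}<e^{x_n}=|\mathcal{Z}(x)|$, so $|h_U(w)|<|w|$ on $U$. Secondly, for quasiconformality: away from $\mathcal{Z}(\mathcal{B}_{\mathcal{Z}})$ the map $h_U$ locally coincides with $\mathcal{Z}\circ F\circ\varphi$, where $\varphi$ is a local inverse branch of $\mathcal{Z}$; this is a composition of a $K_{\mathcal{Z}}$-quasiconformal map, a $K$-quasiconformal map and the $K_{\mathcal{Z}}$-quasiregular map $\mathcal{Z}$ (here $K_{\mathcal{Z}}$ is the maximal dilatation of the fixed Zorich map), so $h_U$ is $K'$-quasiregular there with $K'\defeq K_{\mathcal{Z}}^2K$; being a homeomorphism, $h_U$ is in fact $K'$-quasiconformal on $U\setminus\mathcal{Z}(\mathcal{B}_{\mathcal{Z}})$, and $K'$ does not depend on $U$. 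By Property~\eqref{branch} and the local Lipschitz continuity of $\mathcal{Z}$, the set $\mathcal{Z}(\mathcal{B}_{\mathcal{Z}})$ is a closed set of $\sigma$-finite $(n-2)$-dimensional Hausdorff measure, hence removable for quasiconformal homeomorphisms, so $h_U$ is $K'$-quasiconformal on all of $U$. Thirdly, $h_U$ extends to the identity on $\partial U$: for $w\in\partial U$ with $w\notin\{0,\infty\}$ this uses that $p_{\widetilde{U}}(x)\to 0$ as $x\to\partial\widetilde{U}$ together with the standard bounds on the derivative (of order $e^{x_n}$) and the horizontal injectivity radius of $\mathcal{Z}$, which give $|h_U(w)-w|\to 0$ as $w\to\partial U$ within $U$; the boundary points $0$ and $\infty$ are handled directly, since $e^{-1/2}|w|<|h_U(w)|<|w|$ forces $h_U(w)\to 0$ as $|w|\to 0$ and $h_U(w)\to\infty$ as $|w|\to\infty$.

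I expect the main obstacle to be the middle step — producing $h_U$ as a single-valued homeomorphism despite $\mathcal{Z}$ being highly non-injective. This is exactly where the structure of the Zorich group is essential: the coordinate shift defining $f_U$ descends through the branched covering $\mathcal{Z}$ precisely because every element of $G$ preserves the $n$-th coordinate and is a Euclidean isometry, so the perturbation respects the fibres of $\mathcal{Z}$. A secondary technical point is the verification of quasiconformality across $\mathcal{Z}(\mathcal{B}_{\mathcal{Z}})$, for which I would invoke a standard removability theorem rather than estimating directly, and a third is the boundary extension, which rests on the elementary growth estimates for $\mathcal{Z}$.
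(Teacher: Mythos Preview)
Your proposal is correct and follows essentially the same route as the paper: lift $U$ to a component $\widetilde{U}$ of $\mathcal{Z}^{-1}(U)$, apply Lemma~\ref{lem:1} there, and push the result back down via $\mathcal{Z}$, using the $n$-th-coordinate-preserving structure of $G$ to ensure the map is well defined and the removability of $\mathcal{Z}(\mathcal{B}_{\mathcal{Z}})$ for the dilatation bound. If anything you are more explicit than the paper in several places --- the full equivariance $F\circ g=g\circ F$, the homeomorphism check via invariance of domain, and the boundary extension, which the paper dispatches with ``clearly'' --- but the underlying argument is the same.
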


\begin{proof}
Given $U \subsetneq \R^n \setminus \{ 0 \}$, let $\widetilde{U}$ be a (fixed) connected component of $\mathcal{Z}^{-1}(U)$. Suppose there exist $y_1,y_2 \in \widetilde{U}$ and $g\in G$ such that $y_2 = g(y_1)$; here $G$ is the group of isometries from Section~\ref{sec:Z}. Since connected open sets in $\R^n$ are path connected, it follows that $\widetilde{U}$ is completely invariant under the subgroup generated by $g$. Let $G_1$ be the maximal subgroup of $G$ such that $\widetilde{U}$ is completely invariant under $G_1$.
If $y_1,y_2$ are two points of $\widetilde{U}$ in the same orbit of $G_1$, it follows that $d_{\tilde{U}}(y_1) = d_{\tilde{U}}(y_2)$ and hence $p_{\widetilde{U}}(y_1) = p_{\widetilde{U}}(y_2)$.

We define $h_U$ as follows. First, let $f_{\widetilde{U}}$ be the map from Lemma \ref{lem:1}. Now, given $x\in U$, choose $y \in \mathcal{Z}^{-1}(x) \cap \tilde{U}$, which is guaranteed by Lemma \ref{lem:zorich}, and then set 
\[
h_U(x) \defeq \mathcal{Z}(f_{\widetilde{U}}(y)).
\]

To see that this is well-defined, suppose that $y' \in \tilde{U}$ is also an element of $\mathcal{Z}^{-1}(x)$. Then, by the comments in Section~\ref{sec:Z}, $y' = g(y)$ for some $g\in G_1$. Since every element of $G$ preserves $n$-th coordinates, since $\widetilde{U}$ is completely invariant under $G_1$, and since $p_{\tilde{U}}(y) = p_{\tilde{U}}(g(y)) = p_{\tilde{U}}(y')$, it follows that
\[
\mathcal{Z} ( f_{\widetilde{U}}(y') ) = \mathcal{Z} ( f_{\widetilde{U}}(g(y)) ) = \mathcal{Z}(f_{\widetilde{U}}(y)) = h_U(x).
\]
This completes our proof that $h_U$ is well-defined.

Away from 
%the image of the branch set of $\mathcal{Z}$ 
$\mathcal{Z}(\mathcal{B}_{\mathcal{Z}})$ we can locally define $h_U$ using $\mathcal{Z} \circ f_{\widetilde{U}} \circ \mathcal{Z}^{-1}$. This map is quasiconformal with maximal dilatation bounded above by the product of the square of the maximal dilatation of $\mathcal{Z}$ and the value $K$ from Lemma \ref{lem:1}. In particular, this is independent of $U$.
Since 
%the image of the branch set of $\mathcal{Z}$ 
$\mathcal{Z}(\mathcal{B}_{\mathcal{Z}})$ consists of lines, it is removable for quasiconformal maps and it follows that $h_U$ is $K'$-quasiconformal with $K'$ independent of $U$. 

Finally, since $f_{\widetilde{U}}(y)_n < y_n$ it follows by Property~\eqref{spheres} of the map $\mathcal{Z}$ that we have $|h_U(x)| < |x|$.
Clearly $h_U$ extends to the identity on $\partial U$.
\end{proof}

\section{Proofs of the main results}

\subsection{Polynomial-type}

\begin{proof}[Proof of Theorem \ref{thm:1}] 
Fix $n\geq 2$, and suppose that $T \subset \R^n$ satisfies the hypotheses of the theorem. First we construct a quasiconformal map $h_1 :\R^n \to \R^n$ as follows. For $x\in T$, we set $h_1(x)=x$. If $U$ is any component of $\R^n \setminus T$, then we set $h_1 \equiv h_U$, where $h_U$ is the map from Corollary \ref{cor:2}. Clearly $|h_1(x)| < |x|$ if and only if $x\notin T$. 

We need to show that $h_1$ is quasiconformal. In fact it is easier to work in a slightly different setting. Set $T' \defeq \mathcal{Z}^{-1}(T)$, and define a map $f$ as the identity on $T'$, and equal to the map $f_U$ from Lemma~\ref{lem:1} on each component $U$ of the complement of $T'$. If $x \notin \mathcal{Z} (\mathcal{B}_{\mathcal{Z}})$, then we can define a branch of $\mathcal{Z}^{-1}$ so that, in a neighbourhood of $x$, we have $h_1 = \mathcal{Z} \circ f \circ \mathcal{Z}^{-1}$. If we can show that $f$ is quasiconformal, then in conjunction with the fact that $\mathcal{Z} ( \mathcal{B}_{\mathcal{Z}})$ has $\sigma$-finite $(n-1)$-dimensional Hausdorff measure and \cite[Theorem 35.1]{Vaisala}, it follows that $h_1$ is quasiconformal.

To prove that $f$ is quasiconformal, first recall that the maximal dilatation of $f_U$ is independent of $U$. Hence it is sufficient to show that the linear distortion of $f$ is uniformly bounded in $T'$. % The proof of this is similar to part of the proof of Lemma~\ref{lem:1}, and we continue to use the notation from there. 
%Fix $x\in T'$, and let $r>0$ be small.  one can see by using the fact that $p_U(x)$ is $1$-Lipschitz that
%\[
%L_{h_1}(x,r) \leq \frac{3r}{2},\quad\text{and}\quad \ell_{h_1}(x,r) \geq \frac{r}{2}.
%\]
%Hence $\limsup_{r\to 0} L_{h_1}(x,r) / \ell_{h_1}(x,r) \leq 3$ and so, since $x$ was arbitrary, it follows by \cite[Corollary 4]{Gehring} that $h_1$ is indeed quasiconformal. 
%
%We claim that $3$ is a suitable bound. To see this, 
Fix $x_0 \in T'$, and let $S_r \defeq \{ x : |x-x_0| = r \}$ for $r > 0$. If $x \in S_r \cap T'$, then since $f$ fixes every point of $T'$ we have
\[
|f(x) - f(x_0)| = |x-x_0| = r.
\]
On the other hand, if $x \in S_r \setminus T'$, then it is in some component $U$ of the complement of $T'$. Note that, using the notation of Lemma~\ref{lem:1},
\[
d_U(x) \leq |x-x_0| = r,
\]
and so
\[
p_U(x) \leq r/(1+r).
\]
We can deduce that for all small values of $r$, if $x \in S_r$, then we have both
\[
|f(x) - f(x_0)| \leq |x-x_0| + p_U(x)/2 \leq 3r/2,
\]
and
\[
|f(x)-f(x_0)| \geq |x-x_0| - p_U(x)/2 \geq r/2.
\]

As in the proof of Lemma~\ref{lem:1}, it follows that $\limsup_{r\to 0} L_f(x_0,r) / \ell_f(x_0,r) \leq 3$. Since $x_0$ here is arbitrary, it follows by \cite[Corollary 4]{Gehring} that $f$ is quasiconformal.

Suppose that $d \in \N$ is given. Composing $h_1$ by a power map $P$, as in Section~\ref{sec:power}, yields a quasiregular map $h \defeq P\circ h_1$ of degree $d^{n-1}$, and which satisfies $|h(x) | < |x|^d$ if and only if $x\notin T$. It follows straightforwardly that $\M(f) = T$, as required.
\end{proof}

\subsection{Transcendental-type}

\begin{proof}[Proof of Theorem \ref{thm:2}]
Suppose that $T$ satisfies the hypotheses of the theorem. Suppose that $(r_n)_{n\in \N}$ and $\epsilon \in (0,1)$ are given, and let $E_\epsilon$ and $S_\epsilon$ be the exceptional sets defined in \eqref{Eeq}. Let $D$ be the Drasin-Sastry map from Section~\ref{sec:DS}, which behaves like a power-type map on each component of $\bigcup_{r\notin E_{\epsilon}} Q(r)$. We modify $D$ to $\widetilde{D}$ via Corollary \ref{cor:1}. Then for every $r\notin E_{\epsilon}$, $\widetilde{D}$ maps $S(r)$ onto $S(\Psi(r))$.

Next we construct a quasiconformal map $h_1 :\R^n \to \R^n$ similar to that in the proof of Theorem \ref{thm:1}. In particular, we set $T' = T \setminus S_\epsilon$. We define $h_1$ to be the identity on $T'$, and for any component $U$ of $\R^n \setminus T'$, we set $h_1$ to be $h_U$ via Corollary \ref{cor:2}.

Let $h$ be the transcendental-type quasiregular map $h \defeq \widetilde{D} \circ h_1$. We need to show that $h$ satisfies the conclusions of the theorem. Suppose that $r \in \R$ is such that $r \notin E_\epsilon$, and suppose that $|x| = r$. By construction, if $x \in T$, then $h_1(x) = x$, and so $|h(x)| = \Psi(|x|) = M(r, h)$. On the other hand, if $x \notin T$, then $|h_1(x)| < |x|$, and so $|h(x)| < M(r, h)$. This completes the proof.
\end{proof}

\end{document}